
\documentclass[letterpaper, 10 pt, conference]{ieeeconf}  

\IEEEoverridecommandlockouts                              
\overrideIEEEmargins

\usepackage{algorithm}
\usepackage{algcompatible}
\usepackage{booktabs}
\usepackage{url}
\usepackage{multicol}

\usepackage{amsmath} 
\usepackage{amssymb}  

\title{\LARGE \bf
An Overflow Free Fixed-point Eigenvalue Decomposition Algorithm: Case Study of Dimensionality Reduction in Hyperspectral Images
}


\author{Bibek Kabi$^{1}$ and Anand S Sahadevan$^{2}$ and Tapan Pradhan$^{3}$ \\
 $^{1}$Laboratoire d'Informatique de l'Ecole Polytechnique, CNRS, Universit\'e Paris-Saclay, 91128 Palaiseau, France\\
 $^{2}$Space Application Center, Indian Space Research Organisation, India\\
 $^{3}$Techno India University, West Bengal, India\\
 $^{1}$bibek@lix.polytechnique.fr, $^{2}$anandss@sac.isro.gov.in, $^{3}$hod.ee.tiu@gmail.com
}

\begin{document}

\maketitle
\thispagestyle{empty}
\pagestyle{empty}

\begin{abstract}
We consider the problem of enabling robust range estimation of eigenvalue decomposition (EVD) algorithm for a reliable fixed-point design. The simplicity of fixed-point circuitry has always been so tempting to implement EVD algorithms in fixed-point arithmetic. Working towards an effective fixed-point design, integer bit-width allocation is a significant step which has a crucial impact on accuracy and hardware efficiency. This paper investigates the shortcomings of the existing range estimation methods while deriving bounds for the variables of the EVD algorithm. In light of the circumstances, we introduce a range estimation approach  based on vector and matrix norm properties together with a scaling procedure that maintains all the assets of an analytical method. The method could derive robust and tight bounds for the variables of EVD algorithm. The bounds derived using the proposed approach remain same for any input matrix and are also independent of the number of iterations or size of the problem. Some benchmark hyperspectral data sets have been used to evaluate the efficiency of the proposed technique. It was found that by the proposed range estimation approach, all the variables generated
during the computation of Jacobi EVD is bounded within $\pm1$.
\end{abstract}

\begin{keywords}
Affine arithmetic, eigenvalue decomposition, fixed-point arithmetic, formal methods, integer bit-width allocation, interval arithmetic, overflow, range analysis, satisfiability-modulo-theory.
\end{keywords}

\section{Introduction}
\label{sec:intro} 
Eigenvalue decomposition (EVD) is a key building block in signal processing and control applications. The fixed-point development of eigenvalue decomposition (EVD) algorithm have been extensively studied in the past few years \cite{zoran2007,chen2013reconfigurable,cordicsvdelsevier,naun,ieri,szecowka10,truncatedsvd,svdmimo,circuitandsystems,dacconf} because fixed-point circuitry is significantly simpler and faster. Owing to its simplicity, fixed-point arithmetic is ubiquitous in low cost embedded platforms. Fixed-point arithmetic has played an important role in supporting the field-programmable-gate-array (FPGA) parallelism by keeping the hardware resources as low as possible. The most crucial step involved in the float-to-fixed conversion process is deciding the integer wordlengths (IWLs) in order to avoid overflow. This step has a significant impact on accuracy and hardware resources. \\
\indent IWLs can be determined either using simulation \cite{zoran2007}, \cite{kim98}, \cite{ieee} or by analytical (formal) methods \cite{lee2006accuracy}, \cite{lopez2007improved}, \cite{Akyildiz-02}, \cite{384253}. Existing works on fixed-point EVD have mainly used simulation-based approach for finding the IWLs \cite{zoran2007,chen2013reconfigurable,cordicsvdelsevier,naun,ieri,szecowka10,truncatedsvd,svdmimo,circuitandsystems,dacconf,pradhan2016development} because of its capability to be performed on any kind of systems. In simulation-based methods, variable bounds are estimated using the extreme values obtained from the simulation of the floating-point model. This method needs a large amount of input matrices to obtain a reliable estimation of ranges. Thus, the method is quite slow. Moreover, it cannot guarantee to avoid overflow for non-simulated matrices. This is primarily due to the diverse range of input data. A stochastic range estimation method is discussed in \cite{6:3:380} which computes the ranges by propagating statistical distributions through operations. It requires large number of simulations to estimate system parameters and an appropriate input data set to estimate quality parameters \cite{constantinides2011numerical}, \cite{lopez2007improved} and therefore, it does not produce absolute bounds \cite{banciu2012stochastic}. \\
\indent There are several limitations associated with analytical (formal) methods. An analytical method based on $L_{1}$ norm and transfer function is described in \cite{carletta2003determining}. This method produces theoretical bounds that guarantee no overflow will occur, but the approach is only limited to linear time-invariant systems \cite{constantinides2011numerical}. Interval arithmetic (IA) ignores correlation among signals resulting in an overestimation of ranges \cite{lee2006accuracy}. Affine arithmetic (AA) is a preferable approach that takes into account the interdependency among the signals \cite{goubault2015zonotopic}, but ranges determined through AA explode during division if the range of divisor includes zero \cite{Akyildiz-02}, \cite{384253}. IA also suffers from the same problem. Both IA and AA are pessimistic approaches leading to higher implementation cost \cite{nehmeh2014integer}. Satisfiability modulo theory (SMT) produces tight ranges compared to IA and AA \cite{Akyildiz-02}, \cite{kinsman2011automated}. However, it is computationally expensive and much slower as compared to IA and AA \cite{384253}, \cite{7:1:137}. IA and AA methods compute the ranges of the intermediate variables by propagating the bounds of the input data through the arithmetic operations. SMT refines the range results provided by IA and AA. There are common issues associated with IA, AA and SMT. Given a particular range of the input matrix, these analytical methods compute certain ranges of the intermediate variables based on the arithmetic operations. However, if the range of the input matrix changes, the bounds for the variables no longer remain the same. Another issue with these analytical methods is that the bounds of the variables obtained using these methods are not independent of the number of iterations or the size of the problem. We exemplify these common issues associated with IA, AA and SMT in the next section. 
\section{Motivation}
In this section, we illustrate the issues associated with the existing range estimation methods through a motivational example (dimensionality reduction of hyperspectral images using fixed-point EVD). Along with the covariance matrices of hyperspectral images, we have also used some random symmetric positive semi-definite matrices generated from MATLAB. We have chosen such an instance because it is discovered from the literature that some of the works on dimensionality reduction of hyperspectral images highlight the overflow issues while using fixed-point EVD algorithm. A sincere effort has been made to contemplate them.  \\
\indent The diverse range of the elements of the input data matrices for different hyperspectral images (HSIs) limits the use of fixed-point EVD for dimensionality reduction \cite{hyperfpga3}, \cite{egho2012acceleration}. If the range of the input data is diverse, selecting a particular IWL may not avoid overflow for all range of input cases. Egho \emph{et al.}
\cite{egho2014adaptive} stated that fixed-point implementation of EVD algorithm leads to inaccurate computation of eigenvalues and eigenvectors due to overflow. Therefore, the authors implemented Jacobi algorithm in FPGA using floating-point
arithmetic. Lopez \emph{et al.} \cite{hyperfpga3} reported
overflow issues while computing EVD in fixed-point arithmetic.
Burger \emph{et al.} \cite{burger2011data} mentioned that while processing millions of HSI, numerical instability like overflow should be
avoided. Hence, determination of proper IWLs for
variables of fixed-point EVD algorithm in order to free it from
overflow for all range of input data remains a major research
issue.  \\
\indent The most widely used
algorithm for dimensionality reduction is principal component analysis (PCA). PCA requires computation of eigenvalues ($\lambda$) and
eigenvectors ($x$) given by
\begin{equation}
\label{eq:PCA}
A = X\Lambda {X^{\mathrm{T}}},
\end{equation}
where $A$ is the covariance matrix, $\Lambda$ is a
diagonal matrix containing the eigenvalues and the columns of $X$
contain the eigenvectors. $X$ is a new coordinate basis for the
image. There are several algorithms developed in the
literature for EVD of symmetric matrices \cite{datta2010numerical}, \cite{golub}, \cite{pradhan2013comparative}. Among all, two-sided Jacobi algorithm 
is most accurate and numerically stable \cite{datta2010numerical}, \cite{demmel1992jacobi}.
Most of the work attempted for dimensionality reduction via EVD uses two-sided Jacobi
algorithm \cite{egho2014adaptive}, \cite{egho2012hardware}, \cite{gonzaleznovel}.
The same algorithm is used in this paper for computing EVD of the
covariance matrix of the hyperspectral data. Apart from the accuracy and
stability of Jacobi algorithm, it also has high degree potential for parallelism, and hence can be implemented on FPGA
\cite{ieri}, \cite{szecowka10}.  In \cite{cordicsvdelsevier,ieri,naun,szecowka10,truncatedsvd,svdmimo,circuitandsystems,dacconf} this algorithm is implemented on FPGA with fixed-point arithmetic
to reduce power consumption and silicon area. However, in
all the works, fixed-point implementation of Jacobi algorithm uses
the simulation-based approach for estimating the ranges of
variables. It does not produce promising bounds (as discussed
earlier in section I).  \\
\label{sec:hestenes}  
\begin{algorithm}[ht!]
\caption{Two-sided Jacobi EVD algorithm}
\begin{algorithmic}[1]
\label{alg:hestenesalg}
\STATE $X=I$;
\FOR {$l=1$ to $n$}
\FOR {$i=1$ to $n$}
\FOR {$j=i+1$ to $n$}
\STATE $a=A(i,i)$; 
\STATE $b=A(j,j)$;
\STATE $c=A(i,j)=A(j,i)$;
\STATEx /* compute the Jacobi rotation which diagonalizes
 $\begin{pmatrix}
 A(i,i) & A(i,j)  \\
 A(j,i) & A(j,j)  \\
 \end{pmatrix}$ =$\begin{pmatrix}
 a & c  \\
 c & b  \\
 \end{pmatrix}$ */
\STATE $t=\frac{sign(\frac{b-a}{c})\cdot |c|}{|\frac{b-a}{2}|+\sqrt{c^{2}+{(\frac{b-a}{2})}^{2}}}$;
\STATE $cs=1/\sqrt{1+t^{2}}$; 
\STATE $sn=cs\cdot t$;
\STATEx /* update the 2$\times$2 submatrix */
\STATE $A(i,i)=a-c\cdot t$; 
\STATE $A(j,j)=b+c\cdot t$; 
\STATE $A(i,j)=A(j,i)=0$;
\STATEx /* update the rest of rows and columns $i$ and $j$ */
\FOR {$k=1$ to $n$} except $i$ and $j$
\STATE $tmp=A(i,k)$; 
\STATE $A(i,k)=cs\cdot tmp - sn\cdot A(j,k)$;
\STATE $A(j,k)=sn\cdot tmp + cs\cdot A(j,k)$;
\STATE $A(k,i)=A(i,k)$; 
\STATE $A(k,j)=A(j,k)$;
\ENDFOR
\STATEx /* update the eigenvector matrix $X$ */
\FOR {$k=1$ to $n$}
\STATE $tmp=X(k,i)$; 
\STATE $X(k,i)=cs\cdot tmp - sn\cdot X(k,j)$;
\STATE $X(k,j)=sn\cdot tmp + cs\cdot X(k,j)$;
\ENDFOR
\ENDFOR
\ENDFOR
\ENDFOR
\STATEx /* eigenvalues are diagonals of the final $A$ */
\FOR {$i=1$ to $n$}
\STATE $\lambda_{i}={A(i,i)}$;
\ENDFOR
\end{algorithmic}
\end{algorithm}
\indent Jacobi method computes EVD of a symmetric matrix $A$ by producing a
sequence of orthogonally similar matrices, which eventually
converges to a diagonal matrix \cite{datta2010numerical} given by
\begin{equation}
\Lambda=J^{\mathrm{T}}AJ,
\end{equation}
where $J$ is the Jacobi rotation and $\Lambda$ is a diagonal
matrix containing eigenvalues ($\lambda$). In each step, we
compute a Jacobi rotation with $J$ and update $A$ to $J^TAJ$,
where $J$ is chosen in such a way that two off-diagonal entries of
a $2\times2$ matrix of $A$ are set to zero. This is called
two-sided or classical Jacobi method. Algorithm~1 lists the
steps for Jacobi method. In order to investigate the challenges with fixed-point EVD algorithm,
we have used four different types of HSI collected by
the space-borne (Hyperion), air-borne (ROSIS and AVIRIS), handheld sensors (Landscape) and Synthetic (simulated EnMap). The selected Hyperion image subset contains the
Chilika Lake (latitude: 19.63 N - 19.68 N, longitude: 85.13 E -
85.18 E) and its catchment areas \cite{nasa}, \cite{kabioverflow}. ROSIS data was acquired during a
flight campaign at Pavia University, northern Italy \cite{pavia}. AVIRIS data was gathered by the AVIRIS sensor over the Indian Pines test site in North-western Indiana. Landscape data is obtained
from the database available from Stanford University \cite{skauli2013collection}. The simulated EnMap image subset contains the Maktesh Ramon, Israel (30.57 N, 34.83 E) \cite{segl2012eetes}. The sizes of the covariance matrix for the images are $120 \times 120$ for Hyperion, $103 \times 103$ for ROSIS, $200 \times 200$ for AVIRIS, $148 \times 148$ for Landscape and $244 \times 244$ for simulated EnMap. Out
of 120, 103, 200, 148 and 244 bands, only a certain number of bands are
sufficient for obtaining suitable information due to the large
correlation between adjacent bands. Hence, the dimension of the
image should be reduced to
decrease the redundancy in the data. The principal components (PCs) are
decided from the magnitudes of the eigenvalues. The numbers of PCs
which explain 99.0$\%$ variance are retained for the
reconstruction purpose. The following paragraph describes the shortcomings of the existing range estimation methods while computing bounds for EVD algorithm. \\
\indent Tables~\ref{tab:compare} and \ref{tab:compare1} shows the ranges obtained for Hyperion and ROSIS using simulation, IA, AA and the proposed method. The simulation-based range analysis is performed by feeding the floating-point algorithm with each input matrix separately and observing the data range. Notice that the ranges or the required IWLs (Table~\ref{tab:iwl1}) estimated using the simulation-based approach for Hyperion cannot avoid overflow in case of ROSIS. In other words, based on the ranges obtained using the simulation of Hyperion data one would allocate 24 bits to the integer part, but these number of bits cannot avoid overflow in case of ROSIS. Simulation-based method can only produce exact bounds for the simulated cases. Thus, simulation-based method is characterized by a need for stimuli, due to which it cannot be relied upon in practical implementations. In contrast, the static or analytical or formal methods like IA and AA which depends on the arithmetic operations always provide worst-case bounds so that no overflow occurs. However, the bounds are highly overestimated compared to the actual bounds produced by simulation-based method as shown in Tables~\ref{tab:compare} and \ref{tab:compare1}. This increases the hardware resources unnecessarily. \\
\indent In order to examine the range explosion problem of IA and AA, we computed the range of $A$ using IA and AA for random symmetric positive semi-definite matrices of different sizes generated from MATLAB. Table~\ref{tab:explosion} shows how the range of $A$ explodes when computed through IA and AA.  All the range estimation using IA and AA have been carried out using double precision floating-point format. According to the IEEE 754 double precision floating-point format, the maximum number that can be represented is in the order of $10^{308}$. It is noticed in Table~\ref{tab:explosion} that whenever the range is more than the maximum representable number, it is termed as infinity. It is apparent from the algorithm that variable $A$ is some or the other way related to the computation of all other variables. So, with the range of $A$ becoming infinity, the range of other variables also result in infinity as shown in Tables~\ref{tab:compare} and \ref{tab:compare1}. The range of variable $A$ goes unbounded because of the pessimistic nature of bounds produced by IA and AA. All the issues with existing range estimation methods are handled meticulously by the proposed method that produces unvarying or robust bounds while at the same time tightens the ranges. This is quite apparent from Tables~\ref{tab:compare} and \ref{tab:compare1}.  
\begin{table}[h]
\centering
\caption{Comparison Between The Ranges Computed By Simulation, Interval Arithmetic And Affine Arithmetic With Respect To The Proposed Approach For Hyperion Data With The Range Of The Covariance Matrix As $\lbrack \rm{-}2.42\rm{e-}05, 4.46\rm{e+}05 \rbrack$.} \label{tab:compare}
\begin{tabular}{p{0.4cm}|p{3.1cm}|p{1.0cm}|p{1.0cm}|p{1.0cm}}
  \hline
  Var & Simulation & IA & AA & Proposed\\
  \hline
    $A$ & $\lbrack \rm{-}1.02\rm{e+}06, 9.58\rm{e+}06 \rbrack$ & $\lbrack \rm{-}\infty, \infty \rbrack$ & $\lbrack \rm{-}\infty, \infty \rbrack$ & $\lbrack \rm{-1}, 1 \rbrack$ \\
  \hline
  $t$  & $\lbrack \rm{-1}, 1 \rbrack$ & $\lbrack \rm{-1}, 1 \rbrack$ & $\lbrack \rm{-1}, 1 \rbrack$ & $\lbrack \rm{-1}, 1 \rbrack$\\
  \hline
  $cs$  & $\lbrack 0.71, 1 \rbrack$ & $\lbrack 0, 1 \rbrack$ & $\lbrack 0, 1 \rbrack$ & $\lbrack 0, 1 \rbrack$ \\
  \hline
  $sn$  & $\lbrack \rm{-1}, 1 \rbrack$ & $\lbrack \rm{-1}, 1 \rbrack$  & $\lbrack \rm{-1}, 1 \rbrack$ & $\lbrack \rm{-1}, 1 \rbrack$\\
  \hline
  $a$  & $\lbrack 0, 9.58\rm{e+}06 \rbrack$ & $\lbrack \rm{-}\infty, \infty \rbrack$  & $\lbrack \rm{-}\infty, \infty \rbrack$  & $\lbrack 0, 1 \rbrack$ \\
  \hline
  $b$  & $\lbrack 0, 2.23\rm{e+}06 \rbrack$ & $\lbrack \rm{-}\infty, \infty \rbrack$  & $\lbrack \rm{-}\infty, \infty \rbrack$  & $\lbrack 0, 1 \rbrack$ \\
  \hline
  $c$  & $\lbrack \rm{-}1.02\rm{e+}06, 1.16\rm{e+}06 \rbrack$ & $\lbrack \rm{-}\infty, \infty \rbrack$ & $\lbrack \rm{-}\infty, \infty \rbrack$ & $\lbrack \rm{-1}, 1 \rbrack$ \\
  \hline
  $X$  & $\lbrack \rm{-}0.874, 1 \rbrack$ & $\lbrack \rm{-}\infty, \infty \rbrack$  & $\lbrack \rm{-}\infty, \infty \rbrack$  & $\lbrack \rm{-1}, 1 \rbrack$ \\
  \hline
  $\lambda$ & $\lbrack 6.47\rm{e-}10, 9.58\rm{e+}06 \rbrack$ & $\lbrack \rm{-}\infty, \infty \rbrack$ & $\lbrack \rm{-}\infty, \infty \rbrack$ & $\lbrack 0, 1 \rbrack$ \\
  \hline
\end{tabular}
\end{table}
\begin{table}[h]
\centering 
\caption{Comparison Between The Ranges Computed By Simulation, Interval Arithmetic and Affine Arithmetic With Respect To The Proposed Approach For ROSIS Data With The Range Of The Covariance Matrix As $\lbrack \rm{-}2.67\rm{e-}05, 5.81\rm{e+}05 \rbrack$.} \label{tab:compare1}
\begin{tabular}{p{0.4cm}|p{3.1cm}|p{1.0cm}|p{1.0cm}|p{1.0cm}}
  \hline
  Var & Simulation & IA & AA & Proposed\\
  \hline
    $A$ & $\lbrack \rm{-}3.27\rm{e+}06, 2.04\rm{e+}07 \rbrack$ & $\lbrack -\infty, \infty \rbrack$ & $\lbrack -\infty, \infty \rbrack$ & $\lbrack \rm{-1}, 1 \rbrack$ \\
  \hline
  $t$  & $\lbrack \rm{-1}, 1 \rbrack$ & $\lbrack \rm{-1}, 1 \rbrack$ & $\lbrack \rm{-1}, 1 \rbrack$ & $\lbrack \rm{-1}, 1 \rbrack$ \\
  \hline
  $cs$  & $\lbrack 0.71, 1 \rbrack$ & $\lbrack 0, 1 \rbrack$ & $\lbrack 0, 1 \rbrack$ & $\lbrack 0, 1 \rbrack$ \\
  \hline
  $sn$  & $\lbrack \rm{-1}, 1 \rbrack$ & $\lbrack \rm{-1}, 1 \rbrack$ & $\lbrack \rm{-1}, 1 \rbrack$ & $\lbrack \rm{-1}, 1 \rbrack$
 \\
  \hline
  $a$  & $\lbrack 0, 2.04\rm{e+}07 \rbrack$ & $\lbrack \rm{-}\infty, \infty \rbrack$ & $\lbrack \rm{-}\infty, \infty \rbrack$ & $\lbrack 0, 1 \rbrack$ \\
  \hline
  $b$  & $\lbrack 0, 2.51\rm{e+}06 \rbrack$ & $\lbrack \rm{-}\infty, \infty \rbrack$ & $\lbrack \rm{-}\infty, \infty \rbrack$ & $\lbrack 0, 1 \rbrack$ \\
  \hline
  $c$  & $\lbrack \rm{-}3.27\rm{e+}06, 2.13\rm{e+}06 \rbrack$ & $\lbrack \rm{-}\infty, \infty \rbrack$ & $\lbrack \rm{-}\infty, \infty \rbrack$ & $\lbrack \rm{-1}, 1 \rbrack$ \\
  \hline
  $X$ & $\lbrack \rm{-}0.768, 1 \rbrack$ & $\lbrack \rm{-}\infty, \infty \rbrack$  & $\lbrack \rm{-}\infty, \infty \rbrack$ & $\lbrack \rm{-1}, 1 \rbrack$ \\
  \hline
  $\lambda$ & $\lbrack 2.23\rm{e-}10, 2.04\rm{e+}07 \rbrack$ & $\lbrack \rm{-}\infty, \infty \rbrack$ & $\lbrack \rm{-}\infty, \infty \rbrack$ & $\lbrack 0, 1 \rbrack$ \\
  \hline
\end{tabular}
\end{table}
\begin{table}[ht!]
\centering 
\caption{Comparison Between The Integer Wordlengths Required Based On The Ranges Estimated By Simulation-Based Approach Shown In Tables~\ref{tab:compare} and \ref{tab:compare1}}\label{tab:iwl1}
\begin{tabular}{p{0.4cm}|p{0.9cm}|p{0.7cm}}
  \hline
  Var & Hyperion & ROSIS  \\
  \hline
    $A$ & 24 & 25  \\
  \hline
  $a$  & 22 & 22  \\
  \hline
  $b$  & 24 & 25  \\
  \hline
  $\lambda$ & 24 & 25  \\
  \hline
\end{tabular}
\end{table}
\begin{table*}
\centering
\caption{Range Explosion of $A$ While Computing Range Using Interval Arithmetic And Affine Arithmetic.}\label{tab:explosion}
\begin{tabular}{p{0.43cm}|p{1.15cm}|p{1.8cm}|p{3.0cm}|p{2.7cm}|p{2.7cm}|p{2.7cm}}
\hline
    Size &  Start & $l$=1, $i$=1, $j$=2 & $l$=3, $i$=3, $j$=4 & $l$=4, $i$=4, $j$=6 & $l$=6, $i$=6, $j$=8 & End \\ \hline
    $n$=2      & $\lbrack 0.65, 0.95 \rbrack$  & $\lbrack \rm{-}0.59, 2.35 \rbrack$ &  &  &  & $\lbrack \rm{-}4.06, 4.18  \rbrack$ \\
    \hline
    $n$=4      & $\lbrack 0.03, 0.93 \rbrack$ & $\lbrack \rm{-}5.52, 14.94 \rbrack$ & $\lbrack \rm{-}2.19\rm{e+}21, 2.20\rm{e+}21 \rbrack$ &  &  & $\lbrack \rm{-}3.7\rm{e+}28, 3.7\rm{e+}28 \rbrack$ \\
    \hline
    $n$=6 &  $\lbrack 0.18, 0.79 \rbrack$ & $\lbrack \rm{-}6.90, 28.62 \rbrack$ & $\lbrack \rm{-}6.96\rm{e+}61, 7.08\rm{e+}61 \rbrack$ & $\lbrack \rm{-}2.5\rm{e+}91, 2.6\rm{e+}91 \rbrack$ &  & $\lbrack \rm{-}4.5\rm{e+}139, 4.5\rm{e+}139 \rbrack$    \\
    \hline
    $n$=8 &  $\lbrack 0.11, 0.75 \rbrack$ & $\lbrack \rm{-}9.77, 48.29 \rbrack$ & $\lbrack \rm{-}1.01\rm{e+}126, 1.03\rm{e+}126 \rbrack$ & $\lbrack \rm{-}2.6\rm{e+}187, 2.6\rm{e+}187 \rbrack$ & $\lbrack \rm{-}1.6\rm{e+}301, 1.6\rm{e+}301 \rbrack$ & $\lbrack \rm{-}\infty, \infty \rbrack$    \\
    \hline
    $n$=10 &  $\lbrack 0.09, 0.96 \rbrack$ & $\lbrack \rm{-}16.62, 96.48 \rbrack$ & $\lbrack \rm{-}4.77\rm{e+}215, 4.88\rm{e+}215 \rbrack$ & $\lbrack \rm{-}\infty, \infty \rbrack$ & $\lbrack \rm{-}\infty, \infty \rbrack$ & $\lbrack \rm{-}\infty, \infty \rbrack$    \\
    \hline
    $n$=12 &  $\lbrack 0.03, 0.97 \rbrack$ & $\lbrack \rm{-}21.06, 139.76 \rbrack$ & $\lbrack \rm{-}\infty, \infty \rbrack$ & $\lbrack \rm{-}\infty, \infty \rbrack$ & $\lbrack \rm{-}\infty, \infty \rbrack$ & $\lbrack \rm{-}\infty, \infty \rbrack$    \\
    \hline

\end{tabular}
\end{table*}
In order to combat this, SMT has arisen which produce tight bounds compared to IA and AA. However, SMT is again computationally costly. Its runtime grows abruptly with application complexity. Hence, applying SMT for large size matrices would be too complex. Amidst the individual issues of the analytical methods, there are also some common issues. Provided with a particular range of the input matrix, the analytical methods (IA, AA and SMT) compute certain ranges of the intermediate variables based on the arithmetic operations. Notwithstanding, the ranges no longer remain the same, if the range of the input matrix changes. In order to investigate this issue, we consider two $2 \times 2$ symmetric input matrices given by   
\begin{equation}
 C=
  \begin{pmatrix}
0.4427    & 0.1067 \\
0.1067  &  0.4427  \\
  \end{pmatrix}
\end{equation} and 
\begin{equation}
 D=
  \begin{pmatrix}
33.4834    & 22.2054 \\
22.2054  &  33.4834  \\
  \end{pmatrix}.
\end{equation} 
The ranges obtained using IA and SMT in case of matrix $C$ cannot guarantee to avoid overflow in case of $D$ as shown in Tables~\ref{tab:cnd1} and \ref{tab:cnd2}. The fact is also similar for ranges derived using AA. This scenario is handled correctly by the proposed method that produces robust and tight bounds in both the cases $C$ and $D$. The range estimation using SMT was carried out using the freely available \emph{HySAT} implementation \cite{hysat}.\\
\indent There is one more common issue with these anaytical methods. We know that, provided with a fixed range of the input stimuli, these analytical (formal) methods successfully produce robust bounds \cite{kinsman2011automated}. Even though the range
of the input matrix is fixed, the bounds produced by these analytical methods would be robust only for a particular size of the problem or number of iterations. In other words, the bounds obtained will not be independent of the
number of iterations. 
\begin{table}[h]
\centering
\caption{Comparison Between The Ranges Computed By Simulation, Interval Arithmetic and Satisfiability-modulo-theory With Respect To The Proposed Approach For Input Matrix $C$.} \label{tab:cnd1}
\begin{tabular}{p{0.4cm}|p{1.8cm}|p{1.5cm}|p{1.5cm}|p{1.0cm}}
  \hline
  Var & Simulation & IA & SMT & Proposed \\
  \hline
    $C$ & $\lbrack 0, 0.549 \rbrack$ & $\lbrack \rm{-}3.88, 3.92 \rbrack$ & $\lbrack \rm{-}2.0, 3.2 \rbrack$ & $\lbrack \rm{-1}, 1 \rbrack$ \\
  \hline
  $t$  & $\lbrack \rm{-1}, 1 \rbrack$ & $\lbrack \rm{-}1, 1 \rbrack$ & $\lbrack \rm{-1}, 1 \rbrack$ & $\lbrack \rm{-1}, 1 \rbrack$ \\
  \hline
  $cs$  & $\lbrack 0, 1 \rbrack$ & $\lbrack 0, 1 \rbrack$ & $\lbrack 0, 1 \rbrack$ & $\lbrack 0, 1 \rbrack$ \\
  \hline
  $sn$  & $\lbrack \rm{-1}, 1 \rbrack$ & $\lbrack \rm{-1}, 1 \rbrack$  & $\lbrack \rm{-1}, 1 \rbrack$ & $\lbrack \rm{-1}, 1 \rbrack$\\
  \hline
  $a$  & $\lbrack 0, 0.336 \rbrack$ & $\lbrack 0, 0.336 \rbrack$  & $\lbrack 0, 0.336 \rbrack$  & $\lbrack 0, 1 \rbrack$ \\
  \hline
  $b$  & $\lbrack 0, 0.443 \rbrack$ & $\lbrack 0, 0.443 \rbrack$  & $\lbrack 0, 0.443 \rbrack$  & $\lbrack 0, 1 \rbrack$ \\
  \hline
  $c$  & 0 & 0 & 0 & $\lbrack \rm{-1}, 1 \rbrack$ \\
  \hline
  $X$  & $\lbrack \rm{-}0.707, 0.707 \rbrack$ & $\lbrack \rm{-}2.88, 2.88 \rbrack$  & $\lbrack \rm{-}1.76, 1.76 \rbrack$  & $\lbrack \rm{-1}, 1 \rbrack$ \\
  \hline
  $\lambda$ & $\lbrack 0.336, 0.549 \rbrack$ & $\lbrack \rm{-}2.29, 3.92 \rbrack$ & $\lbrack \rm{-}1.06, 3.22 \rbrack$ & $\lbrack 0, 1 \rbrack$ \\
  \hline
\end{tabular}
\end{table}
\begin{table}[h]
\centering
\caption{Comparison Between The Ranges Computed By Simulation, Interval Arithmetic And Satisfiability-modulo-theory With Respect To The Proposed Approach For Input Matrix $D$.} \label{tab:cnd2}
\begin{tabular}{p{0.3cm}|p{1.75cm}|p{2.0cm}|p{1.5cm}|p{0.85cm}}
  \hline
  Var & Simulation & IA & SMT & Proposed \\
  \hline
    $D$ & $\lbrack 0, 55.68 \rbrack$ & $\lbrack \rm{-}147.57, 151.96 \rbrack$ & $\lbrack \rm{-}87.2, 103.3 \rbrack$ & $\lbrack \rm{-1}, 1 \rbrack$ \\
  \hline
  $t$  & $\lbrack \rm{-1}, 1 \rbrack$ & $\lbrack \rm{-}1, 1 \rbrack$ & $\lbrack \rm{-1}, 1 \rbrack$ & $\lbrack \rm{-1}, 1 \rbrack$ \\
  \hline
  $cs$  & $\lbrack 0, 1 \rbrack$ & $\lbrack 0, 1 \rbrack$ & $\lbrack 0, 1 \rbrack$ & $\lbrack 0, 1 \rbrack$ \\
  \hline
  $sn$  & $\lbrack \rm{-1}, 1 \rbrack$ & $\lbrack \rm{-1}, 1 \rbrack$  & $\lbrack \rm{-1}, 1 \rbrack$ & $\lbrack \rm{-1}, 1 \rbrack$\\
  \hline
  $a$  & $\lbrack 0, 11.278 \rbrack$ & $\lbrack 0, 11.278 \rbrack$ & $\lbrack 0, 11.278 \rbrack$  & $\lbrack 0, 1 \rbrack$ \\
  \hline
  $b$  & $\lbrack 0, 33.483 \rbrack$ & $\lbrack 0, 33.483 \rbrack$  & $\lbrack 0, 33.483 \rbrack$  & $\lbrack 0, 1 \rbrack$ \\
  \hline
  $c$  & 0 & 0 & 0 & $\lbrack \rm{-1}, 1 \rbrack$ \\
  \hline
  $X$  & $\lbrack \rm{-}0.707, 0.707 \rbrack$ & $\lbrack \rm{-}2.36, 2.36 \rbrack$  & $\lbrack \rm{-}1.54, 1.54 \rbrack$  & $\lbrack \rm{-1}, 1 \rbrack$ \\
  \hline
  $\lambda$ & $\lbrack 11.278, 55.68 \rbrack$ & $\lbrack \rm{-}68.5, 151.96 \rbrack$ & $\lbrack \rm{-}12.6, 103.3 \rbrack$ & $\lbrack 0, 1 \rbrack$ \\
  \hline
\end{tabular}
\end{table}  
In order to illustrate this, let us consider two random symmetric positive definite matrices of sizes 3 $\times$ 3 and 5 $\times$ 5 given by
\begin{equation}
 Y=
  \begin{pmatrix}
46.7785 & 28.3501 &  18.8598 \\
28.3501 &  20.1805 &  13.0975 \\
18.8598 &  13.0975  &  8.6377  \\
  \end{pmatrix}
\end{equation} and
\begin{equation}
\begin{split}
 Z= \hspace{4cm} \\
  \begin{pmatrix}
107.6724 &  97.1687 & 107.1030 & 101.8092 &  78.4556 \\
97.1687 & 118.4738 & 109.0664 & 114.7589 & 101.8092 \\
107.1030 & 109.0664 & 126.1528 & 109.0664 & 107.1030 \\
101.8092 & 114.7589 & 109.0664 & 118.4738 &  97.1687  \\
78.4556 & 101.8092 & 107.1030 &  97.1687 & 107.6724 \\
  \end{pmatrix}.
\end{split}
\end{equation}
The bounds obtained using IA for the input matrices $Y$ and $Z$ are shown in Table~\ref{tab:cnd3}. The bounds are unnecessarily large compared to the actual bounds produced by the simulation-based approach shown in Table~\ref{tab:simulxy}. Now, the input matrices are scaled through the upper bound of their spectral norm to limit their range within $\rm{-}$1 and 1. The new matrices ${\hat Y}$ and ${\hat Z}$ whose elements range between $\rm{-}$1 and 1 are given by
\begin{equation}
 {\hat Y}=
  \begin{pmatrix}
0.2848  &  0.3945  &  0.3805 \\
0.3945  &   0.2848  &  0.3945 \\
0.3805  &  0.0163   & 0.2848  \\
  \end{pmatrix}
\end{equation} and
\begin{equation}
 {\hat Z}=
  \begin{pmatrix}
0.1160  &  0.2306  &  0.0349  &  0.3036  &  0.0860 \\
0.2306  &  0.1160  &  0.2306  &  0.0349  &  0.3036 \\
0.0349  &  0.2306  &  0.1160  &  0.2306  &  0.3435 \\
0.3036  &  0.0349  &  0.2306  &  0.1160  &  0.2306 \\
0.0860  &  0.3036  &  0.0349  &  0.2306  &  0.1160 \\
  \end{pmatrix}.
\end{equation}
The ranges obtained for the scaled input matrices are shown in Table~\ref{tab:cnd4}. Even though after scaling, the range of the variables obtained using IA are large and unbounded compared to the original bounds obtained using simulation-based method (Table~\ref{tab:cnd5}). The difference in unboundedness of the ranges shown in Tables~\ref{tab:cnd3} and \ref{tab:cnd4} is not substantially large. This illustrates that the ranges obtained using IA are not independent of the number of iterations. Similar is the case for both AA and SMT. We can observe the phenomenon in Table~\ref{tab:compare4} for one of the test hyperspectral data (simulated EnMAP). Inspite of the range of covariance matrix being $\lbrack \rm{-}3.71\rm{e-}06, 0.032 \rbrack$, the bounds estimated using IA and AA exploded compared to the actual bounds obtained using simulation-based approach. These examples comprehend that the bounds derived using the existing analytical methods are not independent of the number of iterations. Given the issues of the existing range estimation methods, our proposed method provides robust and tight bounds for the variables as shown in Tables~\ref{tab:compare}, \ref{tab:compare1}, \ref{tab:cnd1}, \ref{tab:cnd2} and \ref{tab:compare4}. Moreover, the bounds produced by the proposed method are independent of the size of the problem. The key to all these advantages is the usage of the scaling method and vector, matrix norm properties to derive the ranges.
\begin{table}
\centering
\caption{Ranges Computed By Interval Arithmetic For Input Matrices $Y$ and $Z$.} \label{tab:cnd3}
\begin{tabular}{p{1.0cm}|p{3.0cm}|p{3.0cm}}
  \hline
  Variables & IA ($Y$) & IA ($Z$)  \\
  \hline
    $Y$ or $Z$ & $\lbrack \rm{-}8.88\rm{e+}9, 8.94\rm{e+}9 \rbrack$ & $\lbrack \rm{-}4.51\rm{e+}71, 4.51\rm{e+}71 \rbrack$ \\
  \hline
  $t$  & $\lbrack \rm{-1}, 1 \rbrack$ & $\lbrack \rm{-}1, 1 \rbrack$ \\
  \hline
  $cs$  & $\lbrack 0, 1 \rbrack$ & $\lbrack 0, 1 \rbrack$  \\
  \hline
  $sn$  & $\lbrack \rm{-1}, 1 \rbrack$ & $\lbrack \rm{-1}, 1 \rbrack$  \\
  \hline
  $a$  & $\lbrack \rm{-}9.81\rm{e+}8, 9.93\rm{e+}8 \rbrack$ & $\lbrack \rm{-}1.80\rm{e+}70, 1.81\rm{e+}70 \rbrack$  \\
  \hline
  $b$  & $\lbrack \rm{-}9.81\rm{e+}8, 9.93\rm{e+}8 \rbrack$ & $\lbrack \rm{-}1.80\rm{e+}70, 1.81\rm{e+}70 \rbrack$   \\
  \hline
  $c$  & $\lbrack \rm{-}9.81\rm{e+}8, 9.93\rm{e+}8 \rbrack$ & $\lbrack \rm{-}1.80\rm{e+}70, 1.81\rm{e+}70 \rbrack$ \\
  \hline
  $X$  & $\lbrack \rm{-}9587, 10607 \rbrack$ & $\lbrack \rm{-}4.26\rm{e+}34, 4.38\rm{e+}34 \rbrack$  \\
  \hline
  $\lambda$ & $\lbrack \rm{-}8.88\rm{e+}9, 8.94\rm{e+}9 \rbrack$ & $\lbrack \rm{-}4.51\rm{e+}71, 4.51\rm{e+}71 \rbrack$  \\
  \hline
\end{tabular}
\end{table}
\begin{table}[ht!]
\centering
\caption{Ranges Computed By Simulation-Based Method For Input Matrices ${Y}$ and ${Z}$.} \label{tab:simulxy}
\begin{tabular}{p{1.0cm}|p{3.0cm}|p{3.0cm}}
  \hline
  Variables & Simulation (${Y}$) & Simulation (${Z}$)  \\
  \hline
    ${Y}$ or ${Z}$ & $\lbrack \rm{-}0.123, 72.98 \rbrack$ & $\lbrack \rm{-}15.73, 526.54 \rbrack$ \\
  \hline
  $t$  & $\lbrack \rm{-1}, 1 \rbrack$ & $\lbrack \rm{-}1, 1 \rbrack$  \\
  \hline
  $cs$  & $\lbrack 0, 1 \rbrack$ & $\lbrack 0, 1 \rbrack$  \\
  \hline
  $sn$  & $\lbrack \rm{-1}, 1 \rbrack$ & $\lbrack \rm{-1}, 1 \rbrack$  \\
  \hline
  $a$  & $\lbrack 0, 72.97 \rbrack$ & $\lbrack 0, 526.54 \rbrack$ \\
  \hline
  $b$  & $\lbrack 0, 20.18 \rbrack$ & $\lbrack 0, 526.54 \rbrack$   \\
  \hline
  $c$  & $\lbrack \rm{-}0.123, 28.35 \rbrack$ & $\lbrack \rm{-}8.45, 191.52 \rbrack$ \\
  \hline
  $X$  & $\lbrack \rm{-}0.61, 1 \rbrack$ & $\lbrack \rm{-}0.71, 1 \rbrack$ \\
  \hline
  $\lambda$ & $\lbrack 0.08, 72.98 \rbrack$ & $\lbrack 6.9\rm{e-}3, 526.54 \rbrack$  \\
  \hline
\end{tabular}
\end{table}
\begin{table}[ht!]
\centering
\caption{Ranges Computed By Interval Arithmetic For Input Matrices ${\hat Y}$ And ${\hat Z}$.} \label{tab:cnd4}
\begin{tabular}{p{1.0cm}|p{3.0cm}|p{3.0cm}}
  \hline
  Variables & IA (${\hat Y}$) & IA (${\hat Z}$)  \\
  \hline
    ${\hat Y}$ or ${\hat Z}$ & $\lbrack \rm{-}9.44\rm{e+}7, 9.51\rm{e+}7 \rbrack$ & $\lbrack \rm{-}8.08\rm{e+}68, 8.08\rm{e+}68 \rbrack$ \\
  \hline
  $t$  & $\lbrack \rm{-1}, 1 \rbrack$ & $\lbrack \rm{-}1, 1 \rbrack$  \\
  \hline
  $cs$  & $\lbrack 0, 1 \rbrack$ & $\lbrack 0, 1 \rbrack$  \\
  \hline
  $sn$  & $\lbrack \rm{-1}, 1 \rbrack$ & $\lbrack \rm{-1}, 1 \rbrack$  \\
  \hline
  $a$  & $\lbrack \rm{-}1.04\rm{e+}7, 1.06\rm{e+}7 \rbrack$ & $\lbrack \rm{-}3.23\rm{e+}67, 3.23\rm{e+}67 \rbrack$ \\
  \hline
  $b$  & $\lbrack \rm{-}1.04\rm{e+}7, 1.06\rm{e+}7 \rbrack$ & $\lbrack \rm{-}3.23\rm{e+}67, 3.23\rm{e+}67 \rbrack$   \\
  \hline
  $c$  & $\lbrack \rm{-}1.04\rm{e+}7, 1.06\rm{e+}7 \rbrack$ & $\lbrack \rm{-}3.23\rm{e+}67, 3.23\rm{e+}67 \rbrack$ \\
  \hline
  $X$  & $\lbrack \rm{-}9587, 10607 \rbrack$ & $\lbrack \rm{-}4.26\rm{e+}34, 4.37\rm{e+}34 \rbrack$ \\
  \hline
  $\lambda$ & $\lbrack \rm{-}9.44\rm{e+}7, 9.51\rm{e+}7 \rbrack$ & $\lbrack \rm{-}8.08\rm{e+}68, 8.08\rm{e+}68 \rbrack$  \\
  \hline
\end{tabular}
\end{table}
\begin{table}[ht!]
\centering
\caption{Ranges Computed By Simulation-Based Method For Input Matrices ${\hat Y}$ And ${\hat Z}$.} \label{tab:cnd5}
\begin{tabular}{p{1.0cm}|p{3.0cm}|p{3.0cm}}
  \hline
  Variables & Simulation (${\hat Y}$) & Simulation (${\hat Z}$)  \\
  \hline
    ${\hat Y}$ or ${\hat Z}$ & $\lbrack \rm{-}1.31\rm{e-}3, 0.78 \rbrack$ & $\lbrack \rm{-}0.028, 0.94 \rbrack$ \\
  \hline
  $t$  & $\lbrack \rm{-1}, 1 \rbrack$ & $\lbrack \rm{-}1, 1 \rbrack$  \\
  \hline
  $cs$  & $\lbrack 0, 1 \rbrack$ & $\lbrack 0, 1 \rbrack$  \\
  \hline
  $sn$  & $\lbrack \rm{-1}, 1 \rbrack$ & $\lbrack \rm{-1}, 1 \rbrack$  \\
  \hline
  $a$  & $\lbrack 0, 0.78 \rbrack$ & $\lbrack 0, 0.94 \rbrack$ \\
  \hline
  $b$  & $\lbrack 0, 0.21 \rbrack$ & $\lbrack 0, 0.94 \rbrack$   \\
  \hline
  $c$  & $\lbrack \rm{-}1.31\rm{e-}3, 0.31 \rbrack$ & $\lbrack \rm{-}0.015, 0.34 \rbrack$ \\
  \hline
  $X$  & $\lbrack \rm{-}0.61, 1 \rbrack$ & $\lbrack \rm{-}0.71, 1 \rbrack$ \\
  \hline
  $\lambda$ & $\lbrack 8.59\rm{e-}04, 0.78 \rbrack$ & $\lbrack 1.24\rm{e-}5, 0.94 \rbrack$  \\
  \hline
\end{tabular}
\end{table}
\begin{table}[ht!]
\centering 
\caption{Comparison Between The Ranges Computed By Simulation, Interval Arithmetic And Affine Arithmetic With Respect To The Proposed Approach For Simulated EnMAP Data With The Range Of The Covariance Matrix As $\lbrack \rm{-}3.71\rm{e-}06, 0.032 \rbrack$.}\label{tab:compare4}
\begin{tabular}{p{0.4cm}|p{3.1cm}|p{1.0cm}|p{1.0cm}|p{1.0cm}}
  \hline
  Var & Simulation & IA & AA & Proposed\\
  \hline
    $A$ & $\lbrack \rm{-}0.072, 1.29 \rbrack$ & $\lbrack -\infty, \infty \rbrack$ & $\lbrack -\infty, \infty \rbrack$ & $\lbrack \rm{-1}, 1 \rbrack$ \\
  \hline
  $t$  & $\lbrack \rm{-1}, 1 \rbrack$ & $\lbrack \rm{-1}, 1 \rbrack$ & $\lbrack \rm{-1}, 1 \rbrack$ & $\lbrack \rm{-1}, 1 \rbrack$ \\
  \hline
  $cs$  & $\lbrack 0.71, 1 \rbrack$ & $\lbrack 0, 1 \rbrack$ & $\lbrack 0, 1 \rbrack$ & $\lbrack 0, 1 \rbrack$ \\
  \hline
  $sn$  & $\lbrack \rm{-1}, 1 \rbrack$ & $\lbrack \rm{-1}, 1 \rbrack$ & $\lbrack \rm{-1}, 1 \rbrack$ & $\lbrack \rm{-1}, 1 \rbrack$
 \\
  \hline
  $a$  & $\lbrack 0, 1.29 \rbrack$ & $\lbrack \rm{-}\infty, \infty \rbrack$ & $\lbrack \rm{-}\infty, \infty \rbrack$ & $\lbrack 0, 1 \rbrack$ \\
  \hline
  $b$  & $\lbrack 0, 1.29 \rbrack$ & $\lbrack \rm{-}\infty, \infty \rbrack$ & $\lbrack \rm{-}\infty, \infty \rbrack$ & $\lbrack 0, 1 \rbrack$ \\
  \hline
  $c$  & $\lbrack \rm{-}0.067, 0.174 \rbrack$ & $\lbrack \rm{-}\infty, \infty \rbrack$ & $\lbrack \rm{-}\infty, \infty \rbrack$ & $\lbrack \rm{-1}, 1 \rbrack$ \\
  \hline
  $X$ & $\lbrack \rm{-}0.823, 1 \rbrack$ & $\lbrack \rm{-}\infty, \infty \rbrack$  & $\lbrack \rm{-}\infty, \infty \rbrack$ & $\lbrack \rm{-1}, 1 \rbrack$ \\
  \hline
  $\lambda$ & $\lbrack 1.24\rm{e-}05, 0.942 \rbrack$ & $\lbrack \rm{-}\infty, \infty \rbrack$ & $\lbrack \rm{-}\infty, \infty \rbrack$ & $\lbrack 0, 1 \rbrack$ \\
  \hline
\end{tabular}
\end{table}
\section{Proposed Solution}
\label{sec:scaling}
Particularizing, there are mainly three issues associated with the existing range estimation methods:
\begin{enumerate}
\item incompetence of the simulation-based approach to produce unvarying or robust bounds,
\item bounds produced by existing analytical (formal) methods are not independent of the number of iterations or size of the problem, and
\item overestimated bounds produced by IA and AA.
\end{enumerate}
Taking into account the issues 1 and 2, we propose in this study, an analytical method based on vector and matrix norm
properties to derive unvarying or robust bounds for the variables of EVD algorithm. The proof for deriving the bounds make use of the fact that
all the eigenvalues of a symmetric semi-positive definite matrix are bounded within the upper bound for the spectral norm of the matrix. Further taking into consideration the issue 3, we demonstrate that if the spectral norm of any matrix is kept within unity, tight ranges for the variables of the EVD algorithm can be derived. It is well-known that the spectral norm of any matrix is bounded by \cite{golub}, \cite{higham}
\begin{equation}
\label{eq:scaling}
{{\|A\|}_2} \leq \sqrt{{\|A\|}_1{\|A\|}_\infty}.
\end{equation}
For symmetric matrices, the spectral norm
${\|A\|}_2$ in (\ref{eq:scaling}) can be replaced with the
spectral radius ${\rho (A)}$.  
\newtheorem{theorem}{Theorem}
\begin{theorem}
Given the bounds for spectral norm as ${{\|A\|}_2} \leq \sqrt{{\|A\|}_1{\|A\|}_\infty}$, the Jacobi EVD algorithm applied to ${A}$ has the following bounds for the variables for all $i$, $j$, $k$ and $l$:
\begin{itemize}
\item $\lbrack {A}\rbrack_{kl}$ $\in$ $\lbrack-\sqrt{{\|A\|}_1{\|A\|}_\infty},\sqrt{{\|A\|}_1{\|A\|}_\infty}\rbrack$
\item $t$ $\in$ $\lbrack-1,1\rbrack$ 
\item $cs$ $\in$ $\lbrack0,1\rbrack$ 
\item $sn$ $\in$ $\lbrack-1,1\rbrack$ 
\item $\lbrack X \rbrack_{kl}$ $\in$ $\lbrack-1,1\rbrack$ 
\item $a$ $\in$ $\lbrack 0,\sqrt{{\|A\|}_1{\|A\|}_\infty}\rbrack$ 
\item $b$ $\in$ $\lbrack 0,\sqrt{{\|A\|}_1{\|A\|}_\infty}\rbrack$ 
\item $c$ $\in$ $\lbrack-\sqrt{{\|A\|}_1{\|A\|}_\infty},\sqrt{{\|A\|}_1{\|A\|}_\infty}\rbrack$ 
\item $\lbrack {\lambda_{i}} \rbrack_{k}$ $\in$ $ \lbrack 0,\sqrt{{\|A\|}_1{\|A\|}_\infty} \rbrack$ 
\end{itemize}
where $i$, $j$ denote the iteration number and $\lbrack \rbrack_{k}$ and $\lbrack \rbrack_{kl}$ denote the $k^{th}$ component of a vector and $kl^{th}$ component of a matrix respectively.
\end{theorem}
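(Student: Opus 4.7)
The plan is to exploit two invariants of the Jacobi sweep. First, each inner iteration replaces the working matrix by $J^{\mathrm{T}}AJ$ for a Givens rotation $J$, which is an orthogonal similarity and therefore preserves symmetry, positive semi-definiteness, and in particular the spectral norm ${\|A\|}_2$. Second, the matrix $X$ is the running product of such Givens rotations, hence orthogonal at every step. By induction on the iteration index these two invariants survive the entire sweep, so $\|A^{(l)}\|_2 = \|A^{(0)}\|_2$ and $X^{(l)}$ is orthogonal at every stage.

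Granting the invariants, the entry-wise bounds fall out of standard inequalities. For any matrix $M$ one has $|M_{kl}| = |e_k^{\mathrm{T}}Me_l| \leq \|e_k\|_2\,\|M\|_2\,\|e_l\|_2 = \|M\|_2$, which together with the hypothesis $\|A\|_2 \leq \sqrt{\|A\|_1\|A\|_\infty}$ yields the claimed envelope on $[A]_{kl}$, and specializes immediately to $c = A(i,j)$. Positive semi-definiteness gives $a = e_i^{\mathrm{T}}Ae_i \geq 0$ and similarly $b \geq 0$, while the Rayleigh-quotient characterization combined with PSD-ness gives $0 \leq \lambda_i \leq \|A\|_2$. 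For $X$, orthogonality forces each column to have unit Euclidean norm, so $|X_{kl}| \leq 1$.

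The trigonometric bounds are the easiest and follow directly from the formulas on lines 8--10 of Algorithm~1. Since $\sqrt{c^2 + ((b-a)/2)^2} \geq |c|$, the denominator in the definition of $t$ dominates the numerator in absolute value, giving $|t| \leq 1$. Then $cs = 1/\sqrt{1+t^2} \in (0,1]$ because $1+t^2 \geq 1$, and $|sn| = cs \cdot |t| \leq 1$.

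The main obstacle, and really the only non-cosmetic step, is the bookkeeping needed to verify that the in-place, row-by-row updates on lines 12--13, 17--21, and 24--25 actually realize the single orthogonal similarity $A \mapsto J_{ij}^{\mathrm{T}}AJ_{ij}$ and the right-multiplication $X \mapsto X J_{ij}$, so that the invariants quoted above are genuinely preserved after \emph{all} the partial updates, not merely in principle. Once this is checked, invoking orthogonal invariance of $\|\cdot\|_2$ and PSD-ness at each iteration closes every bound. A secondary subtlety is that the bounds $a,b \geq 0$ and $\lambda_i \geq 0$ implicitly require $A$ to be PSD; this is legitimate in the paper's setting (covariance matrices) but should be recorded as a standing hypothesis. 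After normalization $A \leftarrow A/\sqrt{\|A\|_1\|A\|_\infty}$, all the intervals collapse to $[-1,1]$ or $[0,1]$, matching the numerical tables.
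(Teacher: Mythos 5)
Your proposal is correct and follows essentially the same route as the paper: entrywise bounds from $|[A]_{kl}|\leq\|A\|_2\leq\sqrt{\|A\|_1\|A\|_\infty}$, positive semi-definiteness for the nonnegativity of $a$, $b$ and $\lambda_i$, unit-norm columns of $X$ for $|X_{kl}|\leq 1$, and the numerator/denominator comparison for $t$, $cs$, $sn$. The one place you are actually tighter than the paper is the ``for all iterations'' claim: you justify it by induction on the invariance of $\|\cdot\|_2$, symmetry and PSD-ness under the orthogonal similarity $A\mapsto J^{\mathrm{T}}AJ$ (and orthogonality of the accumulated $X$), whereas the paper argues this more loosely through $\lambda_i=\|A(:,i)\|_2$ and the vector-norm inequality; your version, together with your explicit flagging of the PSD standing hypothesis, is the cleaner justification of the same idea.
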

\begin{proof}
Using vector and matrix norm properties the ranges of the variables can be derived. We start by bounding the elements of the input symmetric matrix as
\begin{equation}
\label{eq:derive1}
\max\limits_{kl} |\lbrack {A}\rbrack_{kl}| \leq {\|{A}\|}_2 = \rho({A}) \leq \sqrt{{\|A\|}_1{\|A\|}_\infty},
\end{equation}
where (\ref{eq:derive1}) follows from \cite{golub}. Hence, the
elements of ${A}$ are in the range $\lbrack-\sqrt{{\|A\|}_1{\|A\|}_\infty},\sqrt{{\|A\|}_1{\|A\|}_\infty}\rbrack$. Line
30 in Algorithm~1 shows the computation of eigenvalues. We know
that $\rho({A}) \leq \sqrt{{\|A\|}_1{\|A\|}_\infty}$, so the upper bound for the eigenvalues is equal to $\sqrt{{\|A\|}_1{\|A\|}_\infty}$. In this
work, the fixed-point Jacobi EVD algorithm is applied to
covariance matrices. Due to the positive semi-definiteness
property of covariance matrices, the lower bound for the
eigenvalues is equal to zero. Thus, the range of ${\lambda_{i}}$ is $\lbrack0,\sqrt{{\|A\|}_1{\|A\|}_\infty}\rbrack$. The eigenvalues in Line 30
can also be calculated as
\begin{equation}
\label{eq:a1}
{\lambda_{i}}={\|{A}(:,i)\|}_2.
\end{equation}
According to vector norm property we can say that
\begin{equation}
\label{eq:a2}
{\|{A}(:,i)\|}_\infty \leq {\|{A}(:,i)\|}_2,
\end{equation}
where ${\|{A}(:,i)\|}_\infty$ is the maximum of the absolute
of the elements in ${A}(:,i)$. From the upper bound of ${\lambda_{i}}$, (\ref{eq:a1}) and (\ref{eq:a2}) we can say that
each element of ${A}(:,i)$ lie in the range
$\lbrack-\sqrt{{\|A\|}_1{\|A\|}_\infty},\sqrt{{\|A\|}_1{\|A\|}_\infty}\rbrack$. Thus all elements of ${A}$ lie in the
range $\lbrack-\sqrt{{\|A\|}_1{\|A\|}_\infty},\sqrt{{\|A\|}_1{\|A\|}_\infty}\rbrack$ for all the iterations. Since we
have considered symmetric positive semi-definite matrices (unlike
the off-diagonal entries the diagonal elements are always
positive), the diagonal elements of ${A}$ are in the range
$\lbrack0,\sqrt{{\|A\|}_1{\|A\|}_\infty}\rbrack$. Rest of the elements lie in the range
$\lbrack-\sqrt{{\|A\|}_1{\|A\|}_\infty},\sqrt{{\|A\|}_1{\|A\|}_\infty}\rbrack$. Line 5, 6 and 7 in Algorithm~1 computes $a$,
$b$ and $c$ respectively. Since $a$ and $b$ are the diagonal
elements of ${A}$, their range is $\lbrack0,\sqrt{{\|A\|}_1{\|A\|}_\infty}\rbrack$. $c$ is
the off-diagonal entry of ${A}$, therefore its range is
$\lbrack-\sqrt{{\|A\|}_1{\|A\|}_\infty},\sqrt{{\|A\|}_1{\|A\|}_\infty}\rbrack$. Line 8 in Algorithm~1 computes $t$. Let $t$=$r/s$ such that
\begin{equation}
\label{eq:r} 
r=sign \Big(\frac{b-a}{c} \Big)\cdot |c| \hspace{1 mm} \textrm{and} \hspace{1 mm} s={\Big|\frac{b-a}{2}\Big|+\sqrt{c^{2}+{\Big(\frac{b-a}{2}
\Big)}^{2}}}.
\end{equation}
According to (\ref{eq:r}), numerator ($r$) of $t$ lies in the range $\lbrack-|c|,|c|\rbrack$.
${|\frac{b-a}{2}|}$ and $\sqrt{c^{2}+{(\frac{b-a}{2})}^{2}}$ are always positive.
The summation $s$ is greater than or equal to $|c|$, because if $b=a$ then $s$ is equal to $c$ or if $b\ne a$ then $s$ is greater than $c$ since $\Big|\frac{b-a}{2}\Big|$ is greater than or equal to zero and $\sqrt{c^{2}+{(\frac{b-a}{2})}^{2}}$ is greater than $|c|$. From the range of $a$, $b$ and the denominator of $t$, we can say that $|c|$ will always be less or equal to ${|\frac{b-a}{2}|+\sqrt{c^{2}+{(\frac{b-a}{2})}^{2}}}$.
Thus, we can conclude that $t$ lies in the range
$\lbrack-1,1\rbrack$ and arc tangent of $t$ is
limited in the range $\lbrack-\frac{\pi}{4},\frac{\pi}{4}\rbrack$.
The Jacobi EVD method tries to make the off-diagonal entries of
2$\times$2 submatrix of $A$ zero by overwriting $A$ with $J^TAJ$.
According to 2$\times$2 symmetric Schur decomposition discussed in
\cite{golub}, $cs$ and $sn$ are cosine and sine trigonometric
functions. Thus, the bounds of $cs$ and $sn$ are
$\lbrack-1,1\rbrack$. Line 9 in Algorithm~1 computes $cs$ which
involves square root operation and therefore the range of $cs$ can
be modified to $\lbrack0,1\rbrack$. As the range of $cs$ and $t$
are $\lbrack0,1\rbrack$ and $\lbrack-1,1\rbrack$ respectively,
using multiplication rule of interval arithmetic
\cite{Hollis:1999:VBD:519964} the range of $sn$ (Line 10) can be
derived as $\lbrack-1,1\rbrack$. Next we bound the elements of
$X$. $X$ is the eigenvector matrix each column of which has unity
norm (eigenvectors of symmetric matrices are orthogonal). Hence all elements of $X$ are in the range
$\lbrack-1,1\rbrack$ following (\ref{eq:q}).
\begin{equation}
\label{eq:q}
{\|{X(:,i)}\|}_\infty \leq {\|{X(:,i)}\|}_2 = 1.
\end{equation}
Since the range of $A$ is $\lbrack-\sqrt{{\|A\|}_1{\|A\|}_\infty},\sqrt{{\|A\|}_1{\|A\|}_\infty}\rbrack$, according
to Line 15 of Algorithm~1, the range of $tmp$ can be fixed
as $\lbrack-\sqrt{{\|A\|}_1{\|A\|}_\infty},\sqrt{{\|A\|}_1{\|A\|}_\infty}\rbrack$.
\end{proof}
The bounds obtained according to Theorem 1 remain unchanged for all the iterations of the algorithm. The bounds are independent of the number of iterations or the size of the input matrix. Thus, the issue 2 has been handled accurately. Now considering the issue 1, the bounds according to Theorem 1 remain same (the pattern remains the same as shown in Theorem 1) for any input matrix, but depend on the factor $\sqrt{{\|A\|}_1{\|A\|}_\infty}$. For different input matrices, the magnitude of $\sqrt{{\|A\|}_1{\|A\|}_\infty}$ will change and this, in turn, will differ the bounds. The issue 1 has not yet been handled prudently. Hence, we propose that if the input matrix is scaled through $m=\sqrt{{\|A\|}_1{\|A\|}_\infty}$ then we can achieve a two-fold advantage: unvarying and tight bounds (solution for issue 3). This will resolve all the issues. If the input matrix is scaled as 
${\hat A}=\frac{A}{m}$, the EVD of matrix ${\hat A}$ is given as
\begin{equation}
{\hat A}x={\hat \lambda}x,
\end{equation}
where $Ax={\lambda} x$ and ${\hat \lambda}= \frac{\lambda}{m}$.
$x$ is the eigenvector and ${\lambda}$ is the eigenvalue. After
scaling through a scalar value, the original eigenvectors do not
change. The original eigenvalues change by a factor $\frac{1}{m}$. We need not recover the original eigenvalues because, in PCA, eigenvaues are only used to calculate the required number of PCs. Since, all the eigenvalues are scaled by the same factor, the number of PCs do not change whether the number is fixed using original eigenvalues or scaled ones. In applications, where original eigenvalues are required, the number of IWLs required is $\left\lceil\log_{2}(\sqrt{{\|A\|}_1{\|A\|}_\infty})\right\rceil$  depending on the magnitude of the scaling factor. Only the binary point of the eigenvalues is required to be adjusted online while for other variables it is fixed irrespective of the property of the input matrix.
\begin{theorem}
Given the scaling factor as $m=\sqrt{{\|A\|}_1{\|A\|}_\infty}$,
the Jacobi EVD algorithm (Algorithm~1) applied to ${\hat A}$ has the following bounds
for the variables for all $i$, $j$, $k$ and $l$: 
\begin{itemize}
\item $\lbrack {\hat A}\rbrack_{kl}$ $\in$ $\lbrack-1,1\rbrack$
\item $t$ $\in$ $\lbrack-1,1\rbrack$ 
\item $cs$ $\in$ $\lbrack0,1\rbrack$ 
\item $sn$ $\in$ $\lbrack-1,1\rbrack$ 
\item $\lbrack X \rbrack_{kl}$ $\in$ $\lbrack-1,1\rbrack$ 
\item $a$ $\in$ $\lbrack0,1\rbrack$ 
\item $b$ $\in$ $\lbrack0,1\rbrack$ 
\item $c$ $\in$ $\lbrack-1,1\rbrack$ 
\item $\lbrack {\hat \lambda_{i}} \rbrack_{k}$ $\in$ $ \lbrack 0,1 \rbrack$ 
\end{itemize}
where $i$, $j$ denote the iteration number and $\lbrack \rbrack_{k}$ and $\lbrack \rbrack_{kl}$ denote the $k^{th}$ component of a vector and $kl^{th}$ component of a matrix respectively.
\end{theorem}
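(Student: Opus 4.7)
My plan is to deduce Theorem~2 as an immediate corollary of Theorem~1 applied to the scaled matrix $\hat A = A/m$, so the entire argument reduces to evaluating the bounding constant $\sqrt{\|\cdot\|_1\|\cdot\|_\infty}$ at $\hat A$ and substituting.

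First I would verify that the hypotheses of Theorem~1 are preserved by the scaling. Dividing a symmetric matrix by the positive scalar $m$ preserves symmetry, and the eigenvalues of $\hat A$ are those of $A$ scaled by $1/m > 0$, so $\hat A$ remains positive semi-definite. Hence Theorem~1 is directly applicable to $\hat A$ with no modification to its proof.

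Next I would compute the bounding constant. The induced $1$- and $\infty$-norms are absolutely homogeneous of degree one, so
\begin{equation*}
\|\hat A\|_1 \;=\; \frac{\|A\|_1}{m}, \qquad \|\hat A\|_\infty \;=\; \frac{\|A\|_\infty}{m},
\end{equation*}
which gives
\begin{equation*}
\sqrt{\|\hat A\|_1\|\hat A\|_\infty} \;=\; \frac{\sqrt{\|A\|_1\|A\|_\infty}}{m} \;=\; \frac{m}{m} \;=\; 1.
\end{equation*}
Substituting this value into each bound furnished by Theorem~1 produces exactly the ranges asserted in Theorem~2: $[\hat A]_{kl}, c \in [-1,1]$ and $a, b, [\hat\lambda_i]_k \in [0,1]$, where the lower bound $0$ on the diagonal entries and on the eigenvalues comes from the positive semi-definite argument already made inside the proof of Theorem~1. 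The bounds $t \in [-1,1]$, $cs \in [0,1]$, $sn \in [-1,1]$ and $[X]_{kl} \in [-1,1]$ are magnitude-independent in Theorem~1 (they follow from the Schur trigonometric identities together with the orthonormality of eigenvectors of a symmetric matrix), so they transfer without change to $\hat A$.

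The main obstacle, such as it is, is purely bookkeeping: one must track every bound appearing in Theorem~1 and confirm that the substitution $\sqrt{\|A\|_1\|A\|_\infty} \mapsto 1$ is applied consistently in each place where $A$ is replaced by $\hat A$, including inside the expression for $tmp$ and the per-iteration range of the working matrix. No new analytic content is required; the scaling device simply normalises the constant appearing in Theorem~1 to unity, which is precisely what yields the two-fold advantage claimed in the paragraph preceding the theorem, namely unvarying bounds that are also tight.
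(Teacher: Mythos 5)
Your proposal is correct and matches the paper's own argument: the paper likewise reduces Theorem~2 to the reasoning of Theorem~1, noting that after scaling by $m=\sqrt{\|A\|_1\|A\|_\infty}$ one has $\max_{kl}|[\hat A]_{kl}|\leq\|\hat A\|_2=\rho(\hat A)\leq 1$ and that "the remaining bounds are derived in the similar fashion." Your explicit use of norm homogeneity to evaluate $\sqrt{\|\hat A\|_1\|\hat A\|_\infty}=1$, and your check that scaling preserves symmetry and positive semi-definiteness, only make explicit what the paper leaves implicit.
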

\begin{proof}
Using vector and matrix norm properties the ranges of the variables can be derived. We start by bounding the elements of the input symmetric matrix as
\begin{equation}
\label{eq:derive2}
\max\limits_{kl} |\lbrack {\hat A}\rbrack_{kl}| \leq {\|{\hat A}\|}_2 = \rho({\hat A}) \leq 1,
\end{equation}
where (\ref{eq:derive2}) follows from \cite{golub}. Hence, the
elements of ${\hat A}$ are in the range $\lbrack-1,1\rbrack$. The remaining bounds are derived in the similar fashion as decribed in proof for Therorem 1. \end{proof}
The bounds on the variables of EVD algorithm obtained after scaling remain constant for all the iterations and also do not vary for any input
matrix. Besides, the bounds are also tight.  
\section{More data sets with results}
\label{sec:res}
In this section, we present a few more hyperspectral data sets, and we compare the bounds on variables of Jacobi EVD algorithm produced by the existing range estimation methods and the proposed approach. Tables~\ref{tab:compare2} and \ref{tab:compare3} show the comparison between the bounds on the variables obtained by existing range estimation methods with respect to the proposed approach through the AVIRIS and Landscape data sets. We can observe that the ranges estimated using the simulation for AVIRIS data cannot avoid overflow in case of Landscape. This is quite apparent from the number of integer bits required, shown in Table~\ref{tab:iwl2}. As usual, the bounds produced by IA and AA outbursted. However, the proposed method produces robust and tight bounds. The bounds obtained are independent of any range of the input matrix and also the number of iterations.  \\
\begin{table}[ht!]
\centering 
\caption{Comparison Between The Ranges Computed By Simulation, Interval Arithmetic And Affine Arithmetic With Respect To The Proposed Approach For AVIRIS Data With The Range Of The Covariance Matrix As $\lbrack \rm{-}5.01\rm{e+}05, 1.07\rm{e+}06 \rbrack$.} \label{tab:compare2}
\begin{tabular}{p{0.4cm}|p{3.1cm}|p{1.0cm}|p{1.0cm}|p{1.0cm}}
  \hline
  Var & Simulation & IA & AA & Proposed\\
  \hline
    $A$ & $\lbrack \rm{-}2.66\rm{e+}6, 2.68\rm{e+}07 \rbrack$ & $\lbrack -\infty, \infty \rbrack$ & $\lbrack -\infty, \infty \rbrack$ & $\lbrack \rm{-1}, 1 \rbrack$ \\
  \hline
  $t$  & $\lbrack \rm{-1}, 1 \rbrack$ & $\lbrack \rm{-1}, 1 \rbrack$ & $\lbrack \rm{-1}, 1 \rbrack$ & $\lbrack \rm{-1}, 1 \rbrack$ \\
  \hline
  $cs$  & $\lbrack 0.71, 1 \rbrack$ & $\lbrack 0, 1 \rbrack$ & $\lbrack 0, 1 \rbrack$ & $\lbrack 0, 1 \rbrack$ \\
  \hline
  $sn$  & $\lbrack \rm{-1}, 1 \rbrack$ & $\lbrack \rm{-1}, 1 \rbrack$ & $\lbrack \rm{-1}, 1 \rbrack$ & $\lbrack \rm{-1}, 1 \rbrack$
 \\
  \hline
  $a$  & $\lbrack 0, 2.68\rm{e+}07 \rbrack$ & $\lbrack \rm{-}\infty, \infty \rbrack$ & $\lbrack \rm{-}\infty, \infty \rbrack$ & $\lbrack 0, 1 \rbrack$ \\
  \hline
  $b$  & $\lbrack 0, 9.21\rm{e+}06 \rbrack$ & $\lbrack \rm{-}\infty, \infty \rbrack$ & $\lbrack \rm{-}\infty, \infty \rbrack$ & $\lbrack 0, 1 \rbrack$ \\
  \hline
  $c$  & $\lbrack \rm{-}2.38\rm{e+}06, 3.66\rm{e+}06 \rbrack$ & $\lbrack \rm{-}\infty, \infty \rbrack$ & $\lbrack \rm{-}\infty, \infty \rbrack$ & $\lbrack \rm{-1}, 1 \rbrack$ \\
  \hline
  $X$ & $\lbrack \rm{-}0.939, 1 \rbrack$ & $\lbrack \rm{-}\infty, \infty \rbrack$  & $\lbrack \rm{-}\infty, \infty \rbrack$ & $\lbrack \rm{-1}, 1 \rbrack$ \\
  \hline
  $\lambda$ & $\lbrack 15.80, 2.67\rm{e+}07 \rbrack$ & $\lbrack \rm{-}\infty, \infty \rbrack$ & $\lbrack \rm{-}\infty, \infty \rbrack$ & $\lbrack 0, 1 \rbrack$ \\
  \hline
\end{tabular}
\end{table}
\begin{table}[ht!]
\centering 
\caption{Comparison Between The Ranges Computed By Simulation, Interval Arithmetic And Affine Arithmetic With Respect To The Proposed Approach For Landscape Data With The range Of The Covariance Matrix As $\lbrack \rm{-}5.47\rm{e+}32, 6.81\rm{e+}32 \rbrack$.}\label{tab:compare3}
\begin{tabular}{p{0.4cm}|p{3.1cm}|p{1.0cm}|p{1.0cm}|p{1.0cm}}
  \hline
  Var & Simulation & IA & AA & Proposed\\
  \hline
    $A$ & $\lbrack \rm{-}5.06\rm{e+}33, 4.32\rm{e+}34 \rbrack$ & $\lbrack -\infty, \infty \rbrack$ & $\lbrack -\infty, \infty \rbrack$ & $\lbrack \rm{-1}, 1 \rbrack$ \\
  \hline
  $t$  & $\lbrack \rm{-1}, 1 \rbrack$ & $\lbrack \rm{-1}, 1 \rbrack$ & $\lbrack \rm{-1}, 1 \rbrack$ & $\lbrack \rm{-1}, 1 \rbrack$ \\
  \hline
  $cs$  & $\lbrack 0.71, 1 \rbrack$ & $\lbrack 0, 1 \rbrack$ & $\lbrack 0, 1 \rbrack$ & $\lbrack 0, 1 \rbrack$ \\
  \hline
  $sn$  & $\lbrack \rm{-1}, 1 \rbrack$ & $\lbrack \rm{-1}, 1 \rbrack$ & $\lbrack \rm{-1}, 1 \rbrack$ & $\lbrack \rm{-1}, 1 \rbrack$
 \\
  \hline
  $a$  & $\lbrack 0, 4.32\rm{e+}34 \rbrack$ & $\lbrack \rm{-}\infty, \infty \rbrack$ & $\lbrack \rm{-}\infty, \infty \rbrack$ & $\lbrack 0, 1 \rbrack$ \\
  \hline
  $b$  & $\lbrack 0, 4.32\rm{e+}34 \rbrack$ & $\lbrack \rm{-}\infty, \infty \rbrack$ & $\lbrack \rm{-}\infty, \infty \rbrack$ & $\lbrack 0, 1 \rbrack$ \\
  \hline
  $c$  & $\lbrack \rm{-}5.06\rm{e+}33, 2.12\rm{e+}33 \rbrack$ & $\lbrack \rm{-}\infty, \infty \rbrack$ & $\lbrack \rm{-}\infty, \infty \rbrack$ & $\lbrack \rm{-1}, 1 \rbrack$ \\
  \hline
  $X$ & $\lbrack \rm{-}0.932, 1 \rbrack$ & $\lbrack \rm{-}\infty, \infty \rbrack$  & $\lbrack \rm{-}\infty, \infty \rbrack$ & $\lbrack \rm{-1}, 1 \rbrack$ \\
  \hline
  $\lambda$ & $\lbrack 1.0\rm{e+}19, 4.32\rm{e+}34 \rbrack$ & $\lbrack \rm{-}\infty, \infty \rbrack$ & $\lbrack \rm{-}\infty, \infty \rbrack$ & $\lbrack 0, 1 \rbrack$ \\
  \hline
\end{tabular}
\end{table}
\begin{table}[ht!]
\centering 
\caption{Comparison Between The Integer Wordlengths Required Based On The Ranges Estimated By Simulation-Based Approach Shown In Tables~\ref{tab:compare2} And \ref{tab:compare3}}\label{tab:iwl2}
\begin{tabular}{p{0.4cm}|p{1.0cm}|p{0.9cm}}
  \hline
  Var  & AVIRIS & Landscape \\
  \hline
    $A$   & 25 & 116\\
  \hline
  $a$    & 24 & 116\\
  \hline
  $b$    & 25 & 116\\
  \hline
  $\lambda$   & 25 & 116\\
  \hline
\end{tabular}
\end{table}

\begin{table}[ht!]
\centering \caption{Signal-to-quantization-noise-ratio Of Eigenvalues Obtained In Fixed-point Arithmetic (WLs chosen are as a
general bitwidth considering the worst case) After Determining Ranges Through Proposed Approach.} 
\label{tab:sqnr}
\begin{tabular}{p{1.0cm}|p{0.5cm}|p{0.5cm}|p{0.5cm}}
\hline

 WLs   & \multicolumn{1}{p{0.8cm}|}{50 bits} & \multicolumn{1}{p{0.8cm}|}{40 bits} & \multicolumn{1}{p{0.8cm}}{32 bits} \\ \hline
Hyperion  & 176.76                  & 106.44                  & 78.03                                                                                            \\ \hline
ROSIS  & 180.13                       & 134.79                       & 74.96                                                                                             \\ \hline
Landscape & 180.65 & 122.36 & 77.18 \\ \hline
AVIRIS & 178.54 & 110.76 & 76.43 \\ \hline
EnMap & 180.67 & 130.24 & 78.36 \\ \hline
\end{tabular}
\end{table}
\begin{table*}[ht!]
\centering \caption{Mean-square-error Of PCs Obtained In Fixed-point Arithmetic (WLs chosen are as a
general bitwidth considering the worst case) After Determining Ranges Through Proposed Approach.} 
\label{tab:mse}
\begin{tabular}{p{0.37cm}|p{0.005cm}|p{0.005cm}|p{0.005cm}|p{0.005cm}|p{0.005cm}|p{0.005cm}|p{0.005cm}|p{0.005cm}|p{0.005cm}|p{0.005cm}|p{0.005cm}}
\hline
WLs & \multicolumn{4}{c|}{Hyperion}                                                                          & \multicolumn{5}{c|}{ROSIS}              & \multicolumn{2}{c}{Landscape}                                                                                         \\ \hline
    & \multicolumn{1}{p{0.43cm}|}{PC1} & \multicolumn{1}{p{0.43cm}|}{PC2} & \multicolumn{1}{p{0.43cm}|}{PC3} & \multicolumn{1}{p{0.43cm}|}{PC4} & \multicolumn{1}{p{0.43cm}|}{PC1} & \multicolumn{1}{p{0.43cm}|}{PC2} & \multicolumn{1}{p{0.43cm}|}{PC3} & \multicolumn{1}{p{0.43cm}|}{PC4} & \multicolumn{1}{p{0.43cm}|}{PC5} & \multicolumn{1}{p{0.43cm}|}{PC1} & \multicolumn{1}{p{0.43cm}}{PC2} \\ \hline
32  & 8.1\rm{e-}7                  & 4.9\rm{e-}7                  & 3.7\rm{e-}6                  & 4.3\rm{e-}6                  & 0                       & 1.8\rm{e-}7                  & 4.5\rm{e-}6                  & 6.5\rm{e-}6                  & 2.4\rm{e-}6       & 1.2\rm{e-}9 & 1.1\rm{e-}8           \\ \hline
40  & 0                       & 0                       & 8.5\rm{e-}7                  & 1.9\rm{e-}7                  & 0                       & 0                       & 0                       & 0                       & 0           & 1.8\rm{e-}10 & 7.4\rm{e-}11            \\ \hline
50  & 0                       & 0                       & 0                       & 0                       & 0                       & 0                       & 0                       & 0                       & 0          & 0 & 0            \\ \hline
\end{tabular}
\end{table*}   
\indent Signal-to-quantization-noise-ratio (SQNR) is chosen as an error measure to evaluate the accuracy of the proposed method \cite{7:1:137}, \cite{dm2012}. It is given by
\begin{equation}
\label{eq:sqnr}
\begin{split}
SQNR=10\log_{10}({\textrm{E}({|{\lambda}_{float}|}^2)})/({\textrm{E}({|{\lambda}_{float}-{\lambda}_{fixed}|}^2)}),
\end{split}
\end{equation}
where ${\lambda}_{float}$ and ${\lambda}_{fixed}$ are the
eigenvalues obtained from double precision floating-point and fixed-point
implementations. SQNR of the eigenvalues obtained through the proposed fixed-point design is shown in Table~\ref{tab:sqnr}. In Table~\ref{tab:sqnr}, we observe high magnitudes of SQNR which exhibit that the set of ranges obtained according to Theorem 2 are sufficient for avoiding overflow for any input matrix. For data sets like Landscape, where the the range is exorbitant resulting in large IWLs (Table~\ref{tab:iwl2}), wordlengths like 50, 40 or 32 bits would never fit. In such cases, with the proposed approach it was possible to fit all the variables within 32 bit wordlength and obtain a high value of SQNR. A common measure to compare two images is mean-square-error (MSE) \cite{wang2009mean}. MSE between PCA images of fixed-point implementations with various WLs after derving the ranges through proposed approach are shown in Table~\ref{tab:mse}. The required number of PCs for Hyperion, ROSIS
and Landscape are 4, 5 and 2 respectively. The number of PCs explaining 99.0$\%$ variance in case of AVIRIS and EnMap are relatively higher. Therefore, the Table~\ref{tab:mse} only exhibits the results of Hyperion, ROSIS and Landscape. However, similar results were obtained for AVIRIS and EnMap. 
We observe that the MSE values are negligibly small which signify that the ranges obtained through the proposed approach are absolutely robust. Thus, the error metrics (SQNR and MSE) imply that the number of integer bits derived using the proposed approach is sufficient for avoiding overflow. After deriving the proper ranges through the proposed approach, the fixed-point design is synthesized on Xilinx Virtex 7 XC7VX485 FPGA for different WLs through Vivado high-level synthesis (HLS) design tool \cite{design2013high}. We have used SystemC (mimics hardware description language VHDL and Verilog) to develop the fixed-point code \cite{ieee2005ieee}. Using the HLS tool, the SystemC fixed-point code is transformed into a hardware IP (intellectual property) described in Verilog. We compare the resource utilization of simulation approach with respect to the proposed approach (for the same level of accuracy) through the test hyperspectral data sets. The comparative study is illustrated in Table~\ref{tab:comdynpro}. There is a noteworthy difference in the hardware resources. The hardware resources in case of simulation approach are considerably large compared to the resources used in case of the proposed approach. For the sake of maintaining the same level of accuracy (SQNR, MSE) as the proposed method, the simulation approach uses 50 bit wordlength. \\
\indent The proposed method also produces robust and tight analytical bounds for variables of singular value decomposition algorithm \cite{siam}.

\begin{table}[ht!]
\centering 
\caption{Comparison Between Hardware Cost ($\%$) Of Fixed-point Jacobi Algorithm After Determining Ranges Through Proposed And Simulation Approaches.} 
\label{tab:comdynpro}
\begin{tabular}{p{0.32cm}|p{0.005cm}|p{0.005cm}|p{0.005cm}|p{0.005cm}|p{0.005cm}|p{0.005cm}|p{0.005cm}|p{0.005cm}|p{0.005cm}}
\hline
     \multicolumn{10}{c}{Proposed}                                                                                                                                                                                 \\ \hline
WL & \multicolumn{3}{c|}{Hyperion}                                                                          & \multicolumn{3}{c|}{ROSIS}             &  \multicolumn{3}{c}{AVIRIS}                                                                                          \\ \hline
    & \multicolumn{1}{p{0.32cm}|}{FF} & \multicolumn{1}{p{0.53cm}|}{LUTs} & \multicolumn{1}{p{0.53cm}|}{Power} & \multicolumn{1}{p{0.32cm}|}{FF} & \multicolumn{1}{p{0.53cm}|}{LUTs} & \multicolumn{1}{p{0.53cm}|}{Power}  & \multicolumn{1}{p{0.32cm}|}{FF} & \multicolumn{1}{p{0.53cm}|}{LUTs} & \multicolumn{1}{p{0.53cm}}{Power} \\ \hline
32  & 1.62                  & 6.29                  & 0.413                                    & 1.62                       & 6.32                  & 0.42    & 1.63                  & 6.51                  & 0.45                                \\ \hline
\multicolumn{10}{c}{Simulation}                                                                                                                                                                                 \\ \hline
WL & \multicolumn{3}{c|}{Hyperion}                                                                          & \multicolumn{3}{c|}{ROSIS}     &  \multicolumn{3}{c}{AVIRIS}                                                                                                  \\ \hline
    & \multicolumn{1}{p{0.32cm}|}{FF} & \multicolumn{1}{p{0.53cm}|}{LUTs} & \multicolumn{1}{p{0.53cm}|}{Power} & \multicolumn{1}{p{0.32cm}|}{FF} & \multicolumn{1}{p{0.53cm}|}{LUTs} & \multicolumn{1}{p{0.53cm}|}{Power}  &  \multicolumn{1}{p{0.32cm}|}{FF} & \multicolumn{1}{p{0.53cm}|}{LUTs} & \multicolumn{1}{p{0.53cm}}{Power} \\ \hline
50  & 8                  & 23                  & 2.59                             & 8                       & 23                  & 2.64        & 8                       & 23                  & 2.64                           \\ \hline
\end{tabular}
\end{table}

\section{Conclusion}
\label{sec:conclu}
In this paper, we bring out the problem of integer bit-width allocation for the variables of eigenvalue decomposition algorithm. We highlight the issues of the existing range estimation methods in the context of EVD. Integer bit-width allocation is an essential step in fixed-point hardware design. In light of the significance of this step, this paper introduces an analytical method based on vector and matrix norm properties together with a scaling procedure to produce robust and tight bounds. Through some hyperspectral data sets, we demonstrate the efficacy of the proposed method in dealing with the issues associated with existing methods. SQNR and MSE values show that the ranges derived using the proposed approach are sufficient for avoiding overflow in case of any input matrix.  There are many other numerical linear algebra algorithms which can benefit from the
proposed method like QR factorization, power method for finding largest eigenvalue,
bisection method for finding eigenvalues of a symmetric tridiagonal matrix, QR iteration,
Arnoldi method for transforming a non-symmetric matrix into an upper Hessenberg matrix
and LU factorization and Cholesky factorization. \\
\indent Dealing with the precision problem will be a scope for the future work.



\bibliographystyle{IEEEtran}
\bibliography{paper}

\clearpage
\end{document}